\newtheorem{theorem}{Theorem}
\newtheorem*{remark*}{Remark}
\newtheorem{definition}{Definition}
\newtheorem*{conjecture*}{Conjecture}
\newtheorem{prop}{Proposition}
\newtheorem{example}{Example}
\newtheorem*{problem*}{Problem}
\title{\LARGE \bf
Global Sensitivity Analysis for the Linear Assignment Problem}
\author{Elad Michael, Tony A. Wood, Chris Manzie, and Iman Shames
\thanks{$^{*}$All authors are with Department of Electrical and Electronic Engineering at the University of Melbourne
{\tt\small eladm@student.unimelb.edu.au,
        \{wood.t,manziec,iman.shames\}@unimelb.edu.au}}%
}
\begin{document}

\maketitle
\thispagestyle{empty}
\pagestyle{empty}

\begin{abstract}
In this paper, the following question is addressed: given a linear assignment problem, how much can the all of the individual assignment weights be perturbed without changing the optimal assignment? The extension of results involving perturbations in just one edge or one row/column are presented. Algorithms for the derivation of these bounds are provided. We also show how these bounds may be used to prevent assignment churning in a multi-vehicle guidance scenario.
\end{abstract}

\section{Introduction}

Task assignment is a fundamental part of multi-agent control. Here we represent task assignment optimization over a weighted bipartite graph, with agents and tasks as vertices and assignments as edges. The objective function may be minimum completion time of all tasks, minimum total fuel usage, balanced completion times, or any of a multitude of other choices\cite{pentico2007assignment}. For many applications, the edge weights that are parameters in the objective are uncertain. Uncertainties may be due to sensor measurement errors, finite difference estimation, or quantization in calculation or communication. By studying which perturbations the optimal assignment is invariant to, we characterize the uncertainty in edge weights that can be tolerated, and when the optimal assignment with the noisy measurements is optimal with respect to the ground truth.

The terms \emph{robustness} of the assignment will be used frequently throughout this paper. Similar analyses may use terms such as ``sensitivity"\cite{lin2007sensitivity}, ``stability"\cite{klaus2013paths}, or ``post-optimality"\cite{KINDERVATER198576} of optimisation problems. These all explore how the set of optimizers or optimal cost change as the problem parameters are varied. The different definitions of sensitivity analysis are covered in \cite{jansen1997sensitivity}, as well as some practical applications of sensitivity analysis for linear programs. Due to the additional constraints and particular structure of assignments problems compared to a standard linear program, we focus on which perturbations to the problem parameters the optimal assignment is invariant to, ``Type 2 Sensitivity Analysis"\cite{jansen1997sensitivity}. The robustness or sensitivity of the assignment problem in this paper refers to intervals within which the edge weights can vary without changing the optimal assignment.

We focus on the robustness of the linear assignment problem (LAP). The LAP minimizes the sum of all edge weights in an assignment, i.e. finds the matching of agents to tasks which minimizes the total cost. The linear assignment problem has many and diverse applications. For example, finding maximum flow within a network\cite{johnsenlr}, lower bounding solutions to the quadratic assignment problem\cite{adams1994improved}, and maximum a-posteriori data association for object tracking\cite{danford2007joint}. 

In this paper we focus on understanding which perturbations the optimal assignment is robust to. However, in some applications, it is preferable to find a potentially sub-optimal assignment which is less sensitive to perturbation. For linear assignment, \cite{bertsimas2004price} provides an algorithm with guaranteed robustness to uncertainty in a subset of the parameters. Similarly, but for the bottleneck assignment problem, \cite{volgenant2010improved} provides robustness to weights within predefined intervals, as well as complexity improvements on similar bottleneck assignment algorithms. However, these papers necessarily sacrifice optimality for robustness. We focus on applications that involve quantifying the robustness of the optimal assignment.

Previous work in assignment problem sensitivity has been restricted to perturbations in one or a small subset of edges. The sensitivity of the linear assignment problem to perturbations in a \emph{single} edge weight perturbation is covered by \cite{lin2003sensitivity} and \cite{volgenant2006addendum}. Robustness to coupled perturbations in the weight of all edges incident on a vertex (one "row" or "column" of edge weights) is investigated in~\cite{lin2007sensitivity} and~\cite{liu2011assessing}. In a multi-agent scenario, with weights as distances for example, this could be interpreted as an error in the state of an agent or a task, causing a coupled error in the weights of all edges incident on that agent or task. Most relevant to the work presented here for the LAP, \cite{sotskov1995some} examines robustness of all weights under \emph{uniform} perturbation, and proves complexity results. A similar analysis to the one provided is presented in \cite{michael_2019}, however for the bottleneck assignment problem.

The contribution of this work is to allow perturbations in \emph{all} edge weights, without coupling. That is, each weight may be perturbed individually and independently. This coincides with the work done in~\cite{sotskov1995some} for the minimal sensitivity edge. However, the results of \cite{sotskov1995some} are conservative for all other edges. In other work, such as \cite{lin2007sensitivity}\cite{liu2011assessing}, simultaneous perturbation is only considered in edges sharing a common vertex. These cannot be co-assigned due to the assignment constraints, and thus extend the single edge results to considering a subset of edges. However, in the case of the linear assignment problem, the changing of an edge weight effects all assignments, in terms of relative cost. We provide algorithms which compute the sensitivity for a given weighted bipartite graph, and discuss the computational complexities. With these algorithms, we formulate a set of sufficient conditions with which an assignment made on noisy measurements can be proved to be optimal despite the noise. The results are used to mitigate the effects of ``assignment churning''\cite{bertuccelli2009real} in a multi-vehicle guidance application.
%
%
The paper is organized as follows. Section~\ref{sec:probForm} is devoted to background on the assignment problem and a formal definition of robustness. Section~\ref{sec:intervals} shows that a single allowable perturbation can be extended to a set of allowable perturbations. In Section~\ref{sec:allowable} we show how to find an allowable perturbation. Section~\ref{sec:Crit} motivates and defines a recursive application of the algorithm provided in Section~\ref{sec:allowable}. The example case of assignment churning is covered in Section~\ref{sec:uav}, and Section~\ref{sec:conclusion} concludes.

\section{Problem Formulation}\label{sec:probForm}

Define a bipartite graph $\mathcal{G}=(\mathcal{V},\mathcal{E})$ along with a set of weights $\mathcal{W}$ over which the LAP is solved. Let the set of vertices $\mathcal{V} = \mathcal{V}_A \cup \mathcal{V}_T$ be such that $\mathcal{V}_A \cap \mathcal{V}_T = \emptyset$, with the edge set $\mathcal{E} \subseteq \mathcal{V}_A \times \mathcal{V}_T$, and edge weights $\mathcal{W} = \{ w_{ij} \in \mathbb{R} \mid (i,j) \in \mathcal{E} \}$. The vertices $\mathcal{V}_A$ and $\mathcal{V}_T$ represent the agents and tasks respectively. Without loss of generality we may assume there are not more tasks than agents, i.e., $|\mathcal{V}_A| \geq |\mathcal{V}_T|$. Also define the assignment as a set of binary decision variables $\Pi = \{ \pi_{ij} \in \{0,1\} \mid (i,j) \in \mathcal{E}\}$. The linear assignment problem can be formulated as

\begin{subequations}
\begin{align}
&  \min_\Pi  &&\sum_{(i,j) \in \mathcal{E}} \pi _{ij} w_{ij} \label{eq:LAP} \\
&  \;\;\textrm{s.t.}  &&\sum_{i \in \mathcal{V}_A} \pi_{ij} = 1,\phantom{ba} j \in \mathcal{V}_T, \label{eq:fullMatching}\\
&  &&\sum_{j \in \mathcal{V}_T} \pi_{ij} \leq 1,\phantom{ba} i \in \mathcal{V}_A. \label{eq:taskDominant}
\end{align}
\label{eq:lapOpt}
\end{subequations}

If $\pi_{ij} = 1$, then we say that that edge $(i,j)$ is assigned or vertex $i$ is assigned to vertex $j$. Constraints \eqref{eq:fullMatching} and \eqref{eq:taskDominant} ensure that every agent is assigned to either one or zero other tasks and that every task is assigned to one agent. The cost of an assignment $\Pi$ can be written as
\begin{equation}
f(\mathcal{W}, \Pi) = \sum_{(i,j) \in \mathcal{E}} \pi_{ij}w_{ij}.
\end{equation} 

There are many algorithms to solve the LAP, such as the Hungarian Algorithm\cite{kuhn1955hungarian}, the JV algorithm\cite{jonker1986improving}, or Bertsekas' auction algorithm\cite{bertsekas1988auction}. In this paper we assess the robustness of the optimal assignment, independent of the algorithm used to solve~\eqref{eq:lapOpt}. Define the mapping $LAP(\mathcal{G},\mathcal{W})$ which takes a weighted bipartite graph and returns the optimizer of~\eqref{eq:lapOpt}. If there are multiple equivalent optimal assignments, $LAP(\mathcal{G},\mathcal{W})$ returns the set of optimal assignments. In this paper we assume that the initial optimal assignment is unique, as the presence of degeneracy in the beginning significantly complicates the discussion. Degeneracy in the initial assignment will be studied in future work.

Let $\Delta := \{\delta_{ij} \mid (i,j)\in\mathcal{E}\}$ be a set of scalar edge weight perturbations. For convenience, we also define the addition of the set of perturbations to the set of edge weights
\begin{equation*}
\mathcal{W}+\Delta := \{w_{ij}+\delta_{ij} \mid (i,j)\in\mathcal{E} \}.
\end{equation*}
We now define robustness, and the concept of an \emph{allowable} perturbation. 

\begin{definition} \label{def:robustness} 
Define $\Delta$ to be an additive perturbation to the set of edge weights $\mathcal{W}$. If the optimal assignment $\Pi^*\in LAP(\mathcal{G},\mathcal{W})$ is invariant to the addition of $\Delta$, i.e., $\Pi^* \in LAP(\mathcal{G},\mathcal{W}+\Delta)$, then $\Delta$ is an allowable perturbation. Equivalently, $\Pi^*\in LAP(\mathcal{G},\mathcal{W})$ is robust to the perturbation $\Delta$.
\end{definition}

With this definition of robustness, the problem addressed in this paper can be stated as follows

\begin{problem*}
Given a bipartite graph $\mathcal{G}$, weights $\mathcal{W}$, with optimal assignment $\Pi^*\in LAP(\mathcal{G},\mathcal{W})$, characterize a set $\Lambda$ such that if $\Delta \in \Lambda$ then $\Delta$ is an allowable perturbation.
\end{problem*}

\section{Interval Bound}\label{sec:intervals}

We first show that if we have found an allowable perturbation $\Delta$, the linearity of the cost function allows us to construct an entire set of allowable perturbations based on $\Delta$.
\begin{theorem}
Let $\Pi^* \in LAP(\mathcal{G},\mathcal{W})$, and let $\Delta = \{\delta_{ij} \mid (i,j)\in\mathcal{E} \}$ be an allowable perturbation as in Definition~\ref{def:robustness}. Define another perturbation $\Delta' = \{\delta'_{ij} \mid (i,j)\in\mathcal{E} \}$ such that
\begin{align}
\delta'_{ij} \leq \delta_{ij} \; \forall \; (i,j) : \pi^*_{ij}=1,\label{eq:opt1}\\
\delta'_{ij} \geq \delta_{ij} \; \forall \; (i,j) : \pi^*_{ij}=0.\label{eq:opt2} 
\end{align} 
Then $\Delta'$ is an allowable perturbation. \label{thm:HalfSpace}
\end{theorem}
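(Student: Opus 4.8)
The plan is to show that the cost of $\Pi^*$ under $\mathcal{W}+\Delta'$ cannot exceed the cost of any other feasible assignment, by comparing with the situation under $\mathcal{W}+\Delta$, where optimality of $\Pi^*$ is already known. Let $\Pi$ be any feasible assignment for \eqref{eq:lapOpt}. I want to establish $f(\mathcal{W}+\Delta',\Pi^*) \le f(\mathcal{W}+\Delta',\Pi)$.

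The key algebraic step is to track how passing from $\Delta$ to $\Delta'$ changes the two sides of the inequality $f(\mathcal{W}+\Delta,\Pi^*)\le f(\mathcal{W}+\Delta,\Pi)$. Writing the difference explicitly,
\begin{align*}
f(\mathcal{W}+\Delta',\Pi^*) - f(\mathcal{W}+\Delta',\Pi)
&= \bigl(f(\mathcal{W}+\Delta,\Pi^*) - f(\mathcal{W}+\Delta,\Pi)\bigr) \\
&\quad + \sum_{(i,j)\in\mathcal{E}} (\delta'_{ij}-\delta_{ij})(\pi^*_{ij}-\pi_{ij}).
\end{align*}
The first bracket is $\le 0$ because $\Delta$ is allowable and $\Pi^*$ is optimal for $\mathcal{W}+\Delta$. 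For the sum, I split the edge set into $\{(i,j):\pi^*_{ij}=1\}$ and $\{(i,j):\pi^*_{ij}=0\}$. On the first part $\pi^*_{ij}-\pi_{ij}\ge 0$ (since $\pi_{ij}\in\{0,1\}$) and $\delta'_{ij}-\delta_{ij}\le 0$ by \eqref{eq:opt1}, so each term is $\le 0$; on the second part $\pi^*_{ij}-\pi_{ij}\le 0$ and $\delta'_{ij}-\delta_{ij}\ge 0$ by \eqref{eq:opt2}, so again each term is $\le 0$. Hence the whole sum is $\le 0$, and therefore $f(\mathcal{W}+\Delta',\Pi^*)\le f(\mathcal{W}+\Delta',\Pi)$. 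Since $\Pi$ was an arbitrary feasible assignment, $\Pi^*\in LAP(\mathcal{G},\mathcal{W}+\Delta')$, i.e. $\Delta'$ is allowable.

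I do not anticipate a serious obstacle; the only point requiring a little care is the sign bookkeeping in the termwise comparison, and (if the paper insists on full rigor) noting that since the original optimal assignment is assumed unique, the inequalities are in fact strict for $\Pi\neq\Pi^*$ when $\Delta=0$, though for the statement as written the non-strict argument above suffices. One should also confirm that $\Pi^*$ remains feasible — but feasibility depends only on $\mathcal{G}$, not on the weights, so this is immediate.
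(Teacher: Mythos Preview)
Your proof is correct and follows essentially the same approach as the paper: both decompose $\Delta'=\Delta+\Delta''$ with $\Delta''=\{\delta'_{ij}-\delta_{ij}\}$, use allowability of $\Delta$ for the first piece, and use the sign structure of $\Delta''$ (non-positive on assigned edges, non-negative on unassigned edges) to conclude that $\Pi^*$ also minimizes the second piece. Your termwise sign analysis of $\sum(\delta'_{ij}-\delta_{ij})(\pi^*_{ij}-\pi_{ij})$ is exactly the paper's claim that $f(\Delta'',\Pi^*)\le f(\Delta'',\Pi)$, just written out more explicitly.
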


\begin{proof}
We will show that any perturbation $\Delta'$ which satisfies~\eqref{eq:opt1} and \eqref{eq:opt2} can be represented as the sum of the perturbation $\Delta$ and $\Delta''$, which are both allowable. By the linearity of the cost function then, we conclude that $\Delta'$ is allowable.

Let $\Delta''=\{\delta'_{ij}-\delta_{ij} \mid (i,j)\in\mathcal{E} \}$. By the linearity of the cost function~\eqref{eq:LAP},
\begin{align}
f(\mathcal{W}+\Delta',\Pi^*) = f(\mathcal{W}+\Delta,\Pi^*) + f(\Delta'',\Pi^*). \label{eq:linCost}
\end{align}

By assumption, $\Delta$ is an allowable perturbation. In other words, for $\Pi^*$ the optimizing assignment over the graph $\mathcal{G}$ with weights $\mathcal{W}$, $\Pi^*$ is an optimizing assignment for weights $\mathcal{W}+\Delta$. Additionally, by construction, $\delta ''_{ij}$ is non-positive for all assigned edges in $\Pi ^*$ and non-negative for all unassigned edges in $\Pi^*$. Therefore,
\begin{align*}
f(\Delta'',\Pi^*) \leq f(\Delta'',\Pi) \quad \forall \;\Pi
\end{align*}
In other words, no other assignment will have a experience/have a greater reduction in cost than $\Pi^*$. Therefore, as both terms on the right hand side of~\eqref{eq:linCost} are minimized by assignment $\Pi^*$, $\Pi^*$ is the minimizer over the graph with weights perturbed by $\Delta'$, i.e. $\Delta'$ is an allowable perturbation.
\end{proof}
Intuitively, Theorem~\ref{thm:HalfSpace} demonstrates that any perturbation which only reduces the weight of edges in the optimal assignment and increases the weight of edges not in the optimal assignment is an allowable perturbation. Using the theorem, any allowable perturbation $\Delta$ defines a set $\Lambda _{\Delta}$ of perturbations which the graph is also robust to.
\begin{example} 
Consider a scenario with $|\mathcal{V_A}|=|\mathcal{V_T}|=3$ and $\mathcal{E}=\mathcal{V}_A\times \mathcal{V}_T$. The weight set $\mathcal{W}$ is represented by a cost matrix $C$, where $w_{ij}$ is the $ij$th element. This is a convenient representation and will be used for the rest of the paper.
\begin{align*} C = 
\begin{bmatrix}
 91 & 33 & 15 \\
 5 & 86 & 92 \\
 85 & 9 & 42
\end{bmatrix}.
\end{align*}
The edges assigned in the optimal matching $\Pi^*$ over this cost matrix are $(2,1),(3,2),(1,3)$ with cost $f(\Pi^*,\mathcal{W})=29$. Trivially, the graph is robust to a perturbation of $0$ to each edge $\Delta := \{0 \mid (i,j) \in \mathcal{E} \}$. Using Theorem~\ref{thm:HalfSpace}, this set of perturbations can be interpreted as a set of intervals as shown in Table~\ref{tble:Trivial}.

\begin{table}[ht]
\begin{center}
\caption{Trivial Perturbation Intervals \label{tble:Trivial}}
\resizebox{0.35\textwidth}{!}{
\begin{tabular}{c  c  c }
$[0,\infty)$ & $[0,\infty)$ & $(-\infty,0]$ \\ 
$(-\infty,0]$ & $[0,\infty)$ & $[0,\infty)$ \\ 
$[0,\infty)$ & $(-\infty,0]$ & $[0,\infty)$
\end{tabular}
}
\end{center}
\end{table}

By Theorem~\ref{thm:HalfSpace}, any other weight perturbation $\Delta'$ whose elements are within the intervals shown in Table~\ref{tble:Trivial} is allowable.
\end{example}

\section{Construction of Allowable Perturbation}\label{sec:allowable}

In the previous section we showed that an allowable perturbation can be interpreted as a bound on a set of allowable perturbations. However, this does not provide us with a method of constructing an allowable perturbation. In order to construct an allowable perturbation, we expand on the \emph{element-wise} results derived in~\cite{KINDERVATER198576} where a similar question of robustness to perturbations in a single edge weight are considered. For an edge $(a,b)\in\mathcal{E}$, define the element-wise sensitivity $s_{ab}$ as
\begin{equation}
s_{ab} = 
\begin{cases}
	f(\mathcal{W},\Pi^*) - f(\mathcal{W},\Pi_{ab}) & \textrm{if } \pi^*_{ab} = 0\\
	f(\mathcal{W},\Pi_{ab}) - f(\mathcal{W},\Pi^*) & \textrm{if } \pi^*_{ab} = 1
\end{cases}
\label{eq:sensMod}
\end{equation}
where the assignment $\Pi_{ab}$ is defined to be the optimizer of~\ref{eq:LAP} with the additional constraint~\eqref{eq:ForBconstraint}
\begin{subequations}
\begin{align}
&  \min_\Pi  &&\sum_{(i,j) \in \mathcal{E}} \pi _{ij} w_{ij} \\
&  \;\;\textrm{s.t.}  &&\sum_{(i,j) \in \mathcal{E}} \pi_{ij} = 1,\\
&  &&\sum_{(i,j) \in \mathcal{E}} \pi_{ij} \leq 1,\\
&  &&\pi_{ab} \neq \pi^*_{ab} . \label{eq:ForBconstraint}
\end{align}
\label{eq:kinderModified}
\end{subequations}
The new constraint~\eqref{eq:ForBconstraint} either forces the assignment of an edge $(a,b)$, if the edge was not included in the original optimal assignment, or it blocks the edge $(a,b)$ if it was part of the original optimal assignment. To understand the effect this has, consider constructing a set of weights $\bar{\mathcal{W}}$ that is the same as $\mathcal{W}$ except perturb the weight $w_{ab}$ by its sensitivity $s_{ab}$. By construction, the assignment costs $f(\bar{\mathcal{W}},\Pi_{ab})$ and $f(\bar{\mathcal{W}},\Pi^*)$ would then be equal. Therefore each $s_{ab}$ represents a bound on the allowable element-wise perturbation of the LAP over the graph $\mathcal{G}$ with weights $\mathcal{W}$. Here we use \emph{element-wise} to indicate only one edge weight is perturbed and all other edge weights are held fixed.

Clearly, the set of element-wise sensitivities do not form an allowable perturbation when taken together. To illustrate, consider the costs of the optimal solution $\Pi^*$ and some other solution $\bar{\Pi} \neq \Pi^*$ over the graph with weights perturbed by the set of all sensitivities, as defined in~\eqref{eq:sensMod}, $\Delta_s = \{s_{ij} \mid (i,j)\in\mathcal{E}\}$.
\begin{align}
f(\mathcal{W}+\Delta_s,\bar{\Pi}) - f(\mathcal{W}+\Delta_s,\Pi^*) &=  f(\mathcal{W},\bar{\Pi}-\Pi^*) \nonumber\\
 &\qquad+ f(\Delta_s,\bar{\Pi}-\Pi^*) \label{eq:pertCostDif}
\end{align}
In order for $\Delta_s$ to be an allowable perturbation, the cost difference~\eqref{eq:pertCostDif} must be non-negative for all assignments $\bar{\Pi}$. However, for each edge $(i,j)\in \bar{\Pi}$, $s_{ij}$ may be as large as $f(\mathcal{W},\Pi^* - \bar{\Pi})$, as in~\eqref{eq:sensMod}. Similarly, for each edge in $(i,j)\in\Pi^*$, $s_{ij}$ may be as large as $f(\mathcal{W},\bar{\Pi}-\Pi^*)$. Any of these perturbations individually may be sufficient to bring the cost difference to zero and combinations of the individual perturbation can make \eqref{eq:pertCostDif} negative. We therefore modify the sensitivity values to form an allowable perturbation for the graph $\mathcal{G}$.

\begin{prop} \label{prop:LowerBound}
Let $\Pi^*$ be the optimal assignment over a bipartite graph $\mathcal{G}$ with weights $\mathcal{W}$ and let the set of values $\mathcal{S}=\{s_{ij} \mid (i,j) \in \mathcal{E} \}$ be the element-wise sensitivities as defined in~\eqref{eq:sensMod}. For $N$ the maximal number of assigned edges in an assignment over $\mathcal{G}$, the perturbation
\begin{align}
\Delta := \{\delta_{ij} =\frac{s_{ij}}{2N} \mid (i,j)\in\mathcal{E} \} \label{eq:allowable2N}
\end{align}
is an allowable perturbation. 
\end{prop}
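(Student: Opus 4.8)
The plan is to show directly that $\Pi^*$ stays a minimizer once each weight $w_{ij}$ is replaced by $w_{ij}+s_{ij}/(2N)$, i.e.\ that $f(\mathcal{W}+\Delta,\bar\Pi)-f(\mathcal{W}+\Delta,\Pi^*)\ge 0$ for every feasible assignment $\bar\Pi$. The case $\bar\Pi=\Pi^*$ is trivial, so fix $\bar\Pi\ne\Pi^*$ and set $D:=f(\mathcal{W},\bar\Pi)-f(\mathcal{W},\Pi^*)\ge 0$, the inequality holding by optimality of $\Pi^*$. Expanding exactly as in \eqref{eq:pertCostDif},
\begin{equation*}
f(\mathcal{W}+\Delta,\bar\Pi)-f(\mathcal{W}+\Delta,\Pi^*)=D+\frac{1}{2N}\sum_{(i,j)\in\mathcal{E}}(\bar\pi_{ij}-\pi^*_{ij})\,s_{ij},
\end{equation*}
and only the edges on which $\bar\Pi$ and $\Pi^*$ disagree contribute to the sum.

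The key step is to bound each contributing term. On a disagreeing edge with $\pi^*_{ij}=1,\bar\pi_{ij}=0$, the assignment $\bar\Pi$ satisfies the blocking version of constraint \eqref{eq:ForBconstraint}, so it is feasible for problem \eqref{eq:kinderModified} and hence $f(\mathcal{W},\Pi_{ij})\le f(\mathcal{W},\bar\Pi)$; combined with optimality of $\Pi^*$ this gives $0\le s_{ij}=f(\mathcal{W},\Pi_{ij})-f(\mathcal{W},\Pi^*)\le D$. Symmetrically, on a disagreeing edge with $\pi^*_{ij}=0,\bar\pi_{ij}=1$, $\bar\Pi$ is feasible for the forcing version of \eqref{eq:kinderModified}, so $f(\mathcal{W},\Pi_{ij})\le f(\mathcal{W},\bar\Pi)$ and $-D\le s_{ij}=f(\mathcal{W},\Pi^*)-f(\mathcal{W},\Pi_{ij})\le 0$. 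In both cases the term $(\bar\pi_{ij}-\pi^*_{ij})\,s_{ij}$ equals $-|s_{ij}|$ and therefore lies in $[-D,0]$.

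To close the argument I count the disagreeing edges: by definition of $N$ each of $\bar\Pi$ and $\Pi^*$ uses at most $N$ edges, so they disagree on at most $2N$ edges. Hence the sum above is at least $-2ND$, and
\begin{equation*}
f(\mathcal{W}+\Delta,\bar\Pi)-f(\mathcal{W}+\Delta,\Pi^*)\ge D+\frac{1}{2N}(-2ND)=0,
\end{equation*}
so $\Delta$ is allowable. The only genuine obstacle is the estimate $|s_{ij}|\le D$ on disagreeing edges — the observation that the competing assignment $\bar\Pi$ is itself a feasible point of each single-edge constrained problem \eqref{eq:kinderModified} used to define the sensitivities; everything else is the linearity identity, the sign pattern of the $s_{ij}$, and the crude count $2N$. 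A minor point worth a sentence is that an edge for which \eqref{eq:kinderModified} is infeasible must be shared by $\Pi^*$ and every feasible assignment, so it never appears in the sum and its (undefined) sensitivity is irrelevant.
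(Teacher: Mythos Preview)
Your proof is correct and follows essentially the same approach as the paper: expand the perturbed cost difference by linearity, bound each sensitivity on a disagreeing edge by $D$ via the observation that $\bar\Pi$ is itself feasible for the constrained problem~\eqref{eq:kinderModified}, and then use the crude count of at most $2N$ disagreeing edges. Your write-up is in fact a bit more explicit than the paper's about why $|s_{ij}|\le D$, and your closing remark on infeasible constrained subproblems is a useful addition the paper omits.
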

\begin{proof}
Define a bipartite graph $\mathcal{G} = (\mathcal{V},\mathcal{E})$ and a set of weights $\mathcal{W}$, with the optimal assignment $\Pi^*\in LAP(\mathcal{G},\mathcal{W})$. For an assignment $\Pi' \neq \Pi^*$, the perturbed cost difference is
\begin{align*}
f(\mathcal{W}+\Delta,\Pi') - f(\mathcal{W}+\Delta,\Pi^*) &= f(\mathcal{W},\Pi'-\Pi^*) \\
&\qquad+ f(\Delta,\Pi'-\Pi^*).
\end{align*}
If this difference is non-negative for all assignments $\Pi'$, then $\Pi^*$ is the optimizer of the graph $\mathcal{G}$ with weights $\mathcal{W}+\Delta$, and $\Delta$ is allowable. Since $\Pi^*$ is the optimizer with the weights $\mathcal{W}$, $f(\mathcal{W},\Pi'-\Pi^*)$ must be non-negative for all assignments $\Pi'$. If $f(\Delta,\Pi'-\Pi^*)$ is non-negative as well then the proof is trivial. However, in the case where $f(\Delta,\Pi'-\Pi^*)$ is negative, if we can show that 
\begin{align}
f(\Delta,\Pi^*-\Pi') \leq f(\mathcal{W},\Pi'-\Pi^*) \label{eq:proofGoal}
\end{align}
then $\Delta$ is an allowable perturbation.

First we expand the term on the left of~\eqref{eq:proofGoal} into the sum of individual perturbations
\begin{equation*}
f(\Delta,\Pi^*-\Pi') = \sum_{(i,j)\in\mathcal{E}} \pi^*_{ij}\delta_{ij} - \pi'_{ij}\delta_{ij}. 
\end{equation*}
Weight perturbations of edges assigned in both $\Pi'$ and $\Pi^*$ will not effect the cost difference, so let $\bar{\mathcal{E}} = \{(i,j) \mid \pi_{ij}\neq\pi^*_{ij}\; \forall \;(i,j) \in\mathcal{E}\}$ be the set of edges which are assigned in $\Pi'$ or $\Pi^*$ but not both. The cost difference can now be written as
\begin{align*}
f(\Delta,\Pi^*-\Pi') &= \sum_{(i,j)\in\bar{\mathcal{E}}} \pi^*_{ij}\delta_{ij} - \pi'_{ij}\delta_{ij}.
\end{align*}

Recall that for each $\delta_{ij}=\frac{s_{ij}}{2N}$, element-wise sensitivity $s_{ij}$ is constructed using the difference between the optimal assignment $\Pi^*$ and the lowest cost assignment which forces or blocks $(i,j)$, $\Pi_{ij}$, as defined in~\eqref{eq:kinderModified}. Using this, we can bound each perturbation $\delta_{ij}\in\bar{\mathcal{E}}$
\begin{align*}
\sum_{(i,j)\in\bar{\mathcal{E}}} \pi^*_{ij}\frac{s_{ij}}{2N} &= \sum_{(i,j)\in\bar{\mathcal{E}}} \pi^*_{ij}\frac{f(\mathcal{W},\Pi_{ij}) - f(\mathcal{W},\Pi^*)}{2N},\\
&\leq \sum_{(i,j)\in\bar{\mathcal{E}}} \pi^*_{ij}\frac{f(\mathcal{W},\Pi'-\Pi^*)}{2N},
\end{align*}
where the bound on the perturbations for the edges assigned in $\Pi'$ are the same except for a negative sign in the numerator, as in~\eqref{eq:sensMod}. Substituting these upper bounds
\begin{align*}
&f(\Delta,\Pi^*-\Pi') = \sum_{(i,j)\in\bar{\mathcal{E}}} \pi^*_{ij}\delta_{ij} - \sum_{(i,j)\in\bar{\mathcal{E}}} \pi'_{ij}\delta_{ij},\\
&\leq \sum_{(i,j)\in\bar{\mathcal{E}}} \pi^*_{ij}\frac{f(\mathcal{W},\Pi'-\Pi^*)}{2N} - \pi'_{ij}\frac{f(\mathcal{W},\Pi^*-\Pi')}{2N},\\
&\leq \sum_{(i,j)\in\bar{\mathcal{E}}} \pi^*_{ij}\frac{f(\mathcal{W},\Pi'-\Pi^*)}{2N} + \pi'_{ij}\frac{f(\mathcal{W},\Pi'-\Pi^*)}{2N},\\
&\leq N\frac{f(\mathcal{W},\Pi'-\Pi^*)}{2N} + N\frac{f(\mathcal{W},\Pi'-\Pi^*)}{2N},\\
&= f(\mathcal{W},\Pi'-\Pi^*).
\end{align*}
Where we removed the summations by noting that $N$ is the largest number of assigned edges in an assignment. Therefore, since the weight perturbation for any solution $\Pi'$ is less than $f(\mathcal{W},\Pi') - f(\mathcal{W},\Pi^*)$, $\Pi^*$ is an optimizer with weight $\mathcal{W}+\Delta$, so $\Delta$ is allowable.
\end{proof}
Using Proposition~\ref{prop:LowerBound} we can construct an allowable perturbation $\Delta$, and using Theorem~\ref{thm:HalfSpace} this can be extended into a set of allowable perturbations, as desired in the problem statement in Section~\ref{sec:probForm}. 

\begin{example}
Recall the cost matrix from the previous example. Using \eqref{eq:kinderModified}, the matrix of element-wise sensitivities as defined in~\ref{eq:sensMod} is,
\begin{align*} S = 
\begin{bmatrix}
 -163 & -51 & 51 \\
 157 & -157 & -163 \\
 -157 & 51 & -51
\end{bmatrix}.
\end{align*}
Dividing the sensitivities by $2N=6$, and converting into intervals as before, the set of allowable perturbations is shown in Table~\ref{tble:2ndtoLast}
\begin{table}[ht]
\begin{center}
\caption{Allowable Perturbation Intervals from Proposition~\ref{prop:LowerBound} \label{tble:2ndtoLast}}
\resizebox{0.40\textwidth}{!}{
\begin{tabular}{c  c  c }
$[-27.1,\infty)$ & $[-8.5,\infty)$ & $(-\infty,8.5]$ \\ 
$(-\infty,26.1]$ & $[-26.1,\infty)$ & $[-27.1,\infty)$ \\ 
$[-26.1,\infty)$ & $(-\infty,8.5]$ & $[-8.5,\infty)$ \\ 
\end{tabular}
}
\end{center}
\end{table}
\end{example}

\section{Critical Perturbation}\label{sec:Crit}

So far we have shown how to construct a set of allowable perturbations, using a particular allowable perturbation $\Delta$. Therefore, we would like to maximize (or minimize) the bounding weights in $\Delta$, in order to obtain the largest set of tolerances for application. To this end, we define a \emph{critical perturbation}.

\begin{definition}\label{def:critPert}
Let $\Pi^* \in LAP(\mathcal{G},\mathcal{W})$, and let $\Delta = \{\delta_{ij} \mid (i,j)\in\mathcal{E} \}$ be an allowable perturbation as in Definition~\ref{def:robustness}. Define another perturbation $\Delta' = \{\delta'_{ij} \mid (i,j)\in\mathcal{E} \}$ satisfying
\begin{align*}
\delta'_{ij} = \delta_{ij} + \epsilon_{ij}\; \forall \; (i,j) : \pi^*_{ij}=1,\\
\delta'_{ij} = \delta_{ij} - \epsilon_{ij}\; \forall \; (i,j) : \pi^*_{ij}=0. 
\end{align*} 
The allowable perturbation $\Delta$ is \emph{critical} if $\Delta'$ is not allowable for \emph{any} $\epsilon_{ij} > 0$.
\end{definition}

Intuitively, a critical perturbation $\Delta$ is allowable buton the boundary of not being so.

With this definition, we may note an interesting connection to the element-wise sensitivity discussed in the previous section. The element-wise sensitivity is the amount that a weight can be perturbed, with all other edges fixed, without changing the optimal solution. Therefore, equivalent to Definition~\ref{def:critPert}, a perturbation $\Delta$ is critical if and only if the element-wise sensitivities of $\mathcal{W}+\Delta$ are all zero. To check if the allowable perturbation $\Delta$ returned from Proposition~\ref{prop:LowerBound} is critical we can simply recompute the set of element-wise sensitivities of $\mathcal{W}+\Delta$. If they are zero, then the perturbation is critical. However, if they are not zero, then we can divide them by $2N$ as in Proposition~\ref{prop:LowerBound} construct a new allowable perturbation. The recursive algorithm is described in pseudo-code in Algorithm~\ref{alg:Recursive}, where the function "sensitivities" returns the set of element-wise sensitivities $s_{ij}$ as defined in~\eqref{eq:sensMod}.
\begin{algorithm}
\KwData{$\mathcal{G},\mathcal{W},\Pi^*,N$}

$S\leftarrow$ sensitivities($\mathcal{G},\mathcal{W},\Pi^*$);\\
$\Delta\leftarrow \frac{1}{2N}S$;

$S\leftarrow$ sensitivities($\mathcal{G},\mathcal{W}+\Delta,\Pi^*$);\\

\While{$S \neq \{0 \mid (i,j)\in\mathcal{E}\}$}{
	$\Delta\leftarrow \Delta + \frac{S}{2N}$;\\
	$S\leftarrow$ sensitivities($\mathcal{G},\mathcal{W}+\Delta,\Pi^*$);
}
\caption{Constructing a critical perturbation over $\mathcal{G}$ with respect to $\mathcal{W}$ and $\Pi^*$. \label{alg:Recursive}}
\end{algorithm}

\begin{prop}\label{prop:recursive}
For a bipartite graph $\mathcal{G} = (\mathcal{V},\mathcal{E})$ with weights $\mathcal{W}$ and optimal assignment $\Pi^*\in LAP(\mathcal{G},\mathcal{W})$, Algorithm~\ref{alg:Recursive} converges and the resulting perturbation $\Delta$ is critical in the sense of definition~\ref{def:critPert}.
\end{prop}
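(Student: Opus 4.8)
The plan is to prove two things: first that Algorithm~\ref{alg:Recursive} terminates, and second that upon termination the accumulated perturbation $\Delta$ is critical. The second part is essentially immediate from the remark preceding the proposition: the loop exits precisely when $\texttt{sensitivities}(\mathcal{G},\mathcal{W}+\Delta,\Pi^*) = \{0 \mid (i,j)\in\mathcal{E}\}$, and as argued in the text a perturbation is critical if and only if all element-wise sensitivities of the perturbed weights $\mathcal{W}+\Delta$ vanish. So the real content is the convergence claim, and I would spend the bulk of the proof there. One should also check along the way that $\Pi^*$ remains optimal for $\mathcal{W}+\Delta$ at every iteration: this follows by induction, since the increment added in each step is $\frac{1}{2N}S$ where $S$ are the element-wise sensitivities of the \emph{current} weighted graph (for which $\Pi^*$ is optimal by the inductive hypothesis), so Proposition~\ref{prop:LowerBound} applies at each stage and guarantees the new $\Delta$ is still allowable.

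For convergence, I would introduce a nonnegative potential that strictly decreases (or decreases by a bounded-away-from-zero amount) at each iteration. A natural candidate is the ``optimality gap'' to the second-best-type assignments: for the weighted graph $\mathcal{W}+\Delta$, let $g(\Delta) = \min_{\Pi \neq \Pi^*} f(\mathcal{W}+\Delta,\Pi'-\Pi^*) \ge 0$, the smallest cost advantage $\Pi^*$ holds over any competing assignment; equivalently $g(\Delta) = \min_{(i,j)\in\mathcal{E}} |s_{ij}|$ over the current sensitivities, since each $s_{ij}$ is realized by the best assignment forced to disagree with $\Pi^*$ on edge $(i,j)$, and every competing assignment disagrees on at least one edge. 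The loop continues exactly while $g(\Delta) > 0$. I would then show that each update $\Delta \mapsto \Delta + \frac{1}{2N}S$ strictly shrinks $g$: adding $\frac{s_{ij}}{2N}$ to assigned edges and the corresponding increments to unassigned edges reduces, for every competing $\Pi'$, the gap $f(\mathcal{W}+\Delta,\Pi'-\Pi^*)$ by $f(\frac{1}{2N}S, \Pi^*-\Pi')$, which is strictly positive whenever $g(\Delta)>0$ — indeed the bounding computation inside the proof of Proposition~\ref{prop:LowerBound} shows $f(\frac{1}{2N}S,\Pi^*-\Pi') \le f(\mathcal{W}+\Delta,\Pi'-\Pi^*)$, and I would argue the inequality is in fact uniformly bounded below by some fixed fraction of $g(\Delta)$, e.g. that the gap against the particular $\Pi'$ achieving $g$ decreases by at least $\tfrac{1}{2N}g(\Delta)$, giving geometric-type contraction $g(\Delta_{k+1}) \le (1-\tfrac{1}{2N})g(\Delta_k)$ — hence $g \to 0$.

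The main obstacle is the gap between ``$g \to 0$'' and ``the algorithm terminates in finitely many steps,'' since termination as written requires $S$ to become \emph{exactly} the zero set, not merely small. I see two routes. The cleaner one: observe that the number of distinct assignments over $\mathcal{G}$ is finite, so the quantities $f(\mathcal{W},\Pi'-\Pi^*)$ take finitely many values, and each sensitivity $s_{ij}$ at every iteration is an integer combination (with coefficients $\pm 1$ and powers of $\tfrac{1}{2N}$) of these finitely many base values plus the running $\Delta$-contributions; one can track that the vector of sensitivities evolves by a fixed linear map and show its only fixed point compatible with $\Pi^*$ staying optimal is $0$, with the iteration being a contraction toward it — but reaching $0$ exactly still needs care. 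The more honest route, which I would actually take, is to argue that once $g(\Delta_k)$ is small enough relative to the minimum nonzero magnitude among the finitely many values $\{f(\mathcal{W},\Pi'-\Pi^*)\}$, the structure forces the sensitivities to collapse to zero in one further step, or alternatively to reinterpret ``converges'' in the statement as convergence of the sequence $\Delta_k$ (which the contraction gives immediately) with the limit being critical by continuity of the sensitivity map. I would flag this interpretation explicitly and present the geometric contraction as the heart of the argument, then conclude criticality of the limit from the if-and-only-if characterization together with continuity of $f$ in the weights.
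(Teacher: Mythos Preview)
Your approach is workable but substantially more elaborate than the paper's, and it buys you something the paper does not attempt (a geometric rate), at the cost of extra bookkeeping that you have not fully carried out.

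The paper's argument is a two-line monotone--bounded argument. For each edge $(i,j)$ the increment at step $k$ is $\tfrac{s^k_{ij}}{2N}$, which is nonnegative when $\pi^*_{ij}=1$ and nonpositive when $\pi^*_{ij}=0$; hence each coordinate sequence $\delta^k_{ij}$ is monotone. Each $|\delta^k_{ij}|$ is bounded by the original element-wise sensitivity $|s_{ij}|$ over $(\mathcal{G},\mathcal{W})$, because any larger perturbation on that edge would already violate allowability of $\Delta^k$. Monotone and bounded implies each $\delta^k_{ij}$ converges, so the increments $\tfrac{s^k_{ij}}{2N}\to 0$, i.e.\ all element-wise sensitivities of $\mathcal{W}+\Delta^k$ tend to zero, which is exactly the criticality condition. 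The paper explicitly interprets ``converges'' as convergence of the sequence $\Delta^k$ as $k\to\infty$, not finite termination of the while loop, so the issue you flag about exact termination is real but is simply sidestepped by that reading.

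Your potential $g(\Delta)=\min_{(i,j)}|s_{ij}|$ and the contraction $g(\Delta_{k+1})\le(1-\tfrac{1}{2N})g(\Delta_k)$ do go through: for the particular $\Pi'$ realizing $g(\Delta_k)$, every edge on which $\Pi'$ and $\Pi^*$ differ contributes a decrease of at least $\tfrac{g(\Delta_k)}{2N}$ to the gap, and there is at least one such edge. This is correct and yields a convergence rate the paper does not provide. However, the contraction alone gives $g\to 0$, i.e.\ only the \emph{smallest} sensitivity tends to zero; you still need a separate step to conclude that \emph{every} $s^k_{ij}\to 0$ and that $\Delta^k$ itself converges, and for that you end up needing essentially the monotone--bounded observation anyway. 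So your route is a genuine alternative for the rate, but the paper's argument is the shorter path to the stated proposition.
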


\begin{proof}
We first prove that the sum of the perturbations $\Delta$ converges by showing that the sequence is monotonic and bounded, and thus also show that the element-wise sensitivities converge to zero. The sensitivities converging to zero thus proves that the perturbation $\Delta$ is critical.

Let $\Delta^k = \{\delta^k_{ij} \mid (i,j)\in\mathcal{E}\}$ be the perturbation at the $k$-th iteration of the while loop in lines $4-7$ of Algorithm~\ref{alg:Recursive}, and likewise let $s^k_{ij}$ be the element-wise sensitivity of the edge $(i,j)$, as defined in~\eqref{eq:sensMod}, at the $k$-th iteration. At iteration $k+1$, the perturbations increment according to
\begin{align}
\delta^{k+1}_{ij} = \delta^{k}_{ij} + \frac{s^k_{ij}}{2N}.
\end{align}
For all edges $(i,j)$ such that $\pi^*_{ij}=1$, $s^k_{ij}\geq0$. Likewise for all edges $(i,j)$ such that $\pi^*_{ij}=0$, $s^k_{ij}\leq0$. Therefore, for each edge $(i,j)$, the sequence $\delta^k_{ij}$ is \emph{monotonic} non-decreasing/non-increasing for edges assigned/not-assigned in the optimizer.

Each perturbation $\delta_{ij}=\sum_k \frac{s^k_{ij}}{2N}$ is bounded by its element-wise sensitivity over the original graph $\mathcal{G} = (\mathcal{V},\mathcal{E})$ with weights $\mathcal{W}$, i.e.
\begin{align*}
|\delta^{k}_{ij}| \leq |s_{ij}| \; \forall\; k.
\end{align*}
If an edge perturbation $\delta_{ij}$ was larger than the sensitivity $s_{ij}$, then the perturbation $\Delta$ would not be allowable. Since $\Delta ^{k}$ is allowable at each iteration $k$ by construction, the weight perturbations perturbation $\delta_{ij}$ must be bounded. The sequence of edge perturbations $\delta^{k}_{ij}$ is both monotonic and the sum is bounded, so it must converge.

By the convergence of each sequence $\delta^{k}_{ij}$, the sequence $s^k_{ij}$ must converge to zero. In other words, as $k\rightarrow \infty$, the perturbed weights $\mathcal{W} + \Delta^k$ have element-wise sensitivities of $0$ for all edges. Therefore, as $k\rightarrow \infty$, $\Delta^k$ is a critical perturbation.

\end{proof}

\begin{example}
Recall the cost matrix from the previous example. Using Algorithm~\ref{alg:Recursive} to obtain an allowable perturbation, and Theorem~\ref{thm:HalfSpace} to convert the allowable perturbation into a set of intervals, the results are shown in Table \ref{tble:Last}.
\begin{table}[ht]
\begin{center}
\caption{Allowable Perturbations from Algorithm~\ref{alg:Recursive} \label{tble:Last}}
\resizebox{0.40\textwidth}{!}{
\begin{tabular}{c  c  c }
$[-37,\infty)$ & $[-8.5,\infty)$ & $(-\infty,8.5]$ \\ 
$(-\infty,35]$ & $[-35,\infty)$ & $[-37,\infty)$ \\ 
$[-35,\infty)$ & $(-\infty,8.5]$ & $[-8.5,\infty)$ 
\end{tabular}
}
\end{center}
\end{table}
\end{example}

\section{Multi-Vehicle Guidance}\label{sec:uav}

In this section we apply the assignment sensitivity analysis to a multi-vehicle guidance problem. Multi-vehicle guidance may involve several vehicles cooperating to achieve more complex tasks, necessitating task assignment among the agents. The assignment may be sensitive to errors in the input data, leading to potentially suboptimal assignments during the completion of the task. A naive approach is to reassign tasks as new data is available under the assumption that the most recent measurements are the most reliable. However, continuously re-solving the assignment problem may be costly. Furthermore, reassignment based on real-time measurements while agents are completing their assigned tasks may lead to repeated switching of tasks referred to as churning behavior\cite{bertuccelli2009real}\cite{alighanbari2004filter}. To mitigate the effects of assignment churning, a new objective function focusing on robustness and penalizing the switching of assignments was used in \cite{bertuccelli2009real}. However, this may compromise the optimality of the assignment, as the objective function is modified. The assignment invariant perturbation intervals derived in this paper can be used to guarantee the optimality of an assignment, preventing the need for reassignment, and avoiding churning.

Consider a set of vehicle positions $\{x_i\}_{i=0}^N$, $x_i\in\mathbb{R}^2$, and target destinations $\{y_i\}_{i=0}^N$, $y_i\in\mathbb{R}^2$, shown in Fig.~\ref{fig:churning} as dots and crosses. Let the edge weights be
\begin{align*}
w^{(k)}_{ij} = d(x^{(k)}_i,y^{(k)}_j) + \epsilon^{(k)}_{ij}
\end{align*}
where $d(\cdot)$ is a distance function and $\epsilon^{(k)}_{ij}$, with $|\epsilon^{(k)}_{ij}|\leq\bar{\epsilon}^{(k)}_{ij}$ the bounded error of the distance measurement at step $k$. The set of measurements at each time step may be naively used to construct the assignment at time $k$, $\Pi_k$, subject to a new realization of the noise $\epsilon^{(k)}_{ij}$. This can lead to reassignment while the agents are traveling, or churning, as shown in Fig.~\ref{fig:churning}.

\begin{figure}[thpb]
  \centering
  \includegraphics[scale=0.5]{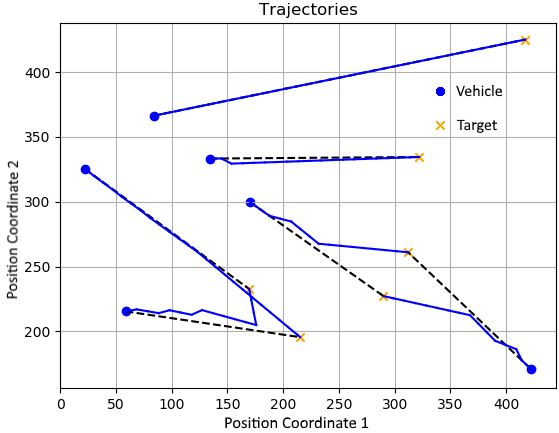}
  \caption{Assignment Churning \label{fig:churning}}
\end{figure}

In Fig.~\ref{fig:churning}, the the solid blue lines show the trajectories of agents that may begin with the optimal assignment, but without ground truth knowledge continue to reassign on new data. Then, as the agents draw nearer to the targets, the assignments settle. Clearly, the total distance traveled is significantly increased due to the deviations from the optimal path, shown as a dashed line. Given that the agents are traveling the shortest paths to their assigned destinations, if the assignment was optimal at any point then reassignment can only increase the total traveled distance.

The bounds for allowable perturbations as described in Section~\ref{sec:allowable} or Section~\ref{sec:Crit} can be used to avoid churning. Let $\Delta = \{\delta_{ij} \mid (i,j)\in\mathcal{E}\}$ be the allowable perturbation as described in Section~\ref{sec:Crit}, and recall that $\bar{\epsilon}^{(k)}_{ij}$ bounds the error for each edge weight at each step. If, at some reassignment step $k$, the error bounds satisfy
\begin{align}
\bar{\epsilon}^{(k)}_{ij} \leq \delta^{(k)}_{ij} \;\forall\; \pi^{(k)}_{ij} = 1, \label{eq:optCond1}\\
\delta^{(k)}_{ij} \leq -\bar{\epsilon}^{(k)}_{ij}\;\forall\; \pi^{(k)}_{ij} = 0, \label{eq:optCond2}
\end{align}
then the assignment over the measurements is the optimal assignment over the ground truth states. This is easily shown by noting that any realization of the bounded measurement errors $\epsilon^{(k)}_{ij}$ will form an allowable perturbation by Theorem~\ref{thm:HalfSpace}, and the optimal assignment is invariant under these errors. Therefore, the assignment $\Pi_k$ made over the measurements must be equal to the optimal assignment $\Pi^*_k$ made on the noiseless measurements. After this time step, reassignment can only increase the total traveled distance. This avoids churning and any computation involved with constructing the new costs and finding the new assignments.

\section{Concluding Remarks}\label{sec:conclusion}

In this paper the robustness of the linear assignment problem (LAP) to a set of perturbations is analyzed, and applied to prevent assignment churning without changing the objective function. We first showed that a perturbation which does not change the optimal assignment can be interpreted as the bound on a set of allowable perturbations, and then constructed an allowable perturbation based on the element-wise sensitivities. We also proved that this method can be used recursively to construct critical perturbation. These element-wise bounds can be used to ensure LAP optimality under uncertainty, or to prevent unnecessary resolving of the LAP in online scenarios.

In future work, a closed form solution for the recursive algorithm will be investigated. Similar robustness for various other assignment problem formulations are to be investigated. Another research direction is the interpretation of these results as a strategy to deploy agents informed by the robustness. This would ensure that reassignments would be unlikely, even under uncertainty. In a similar vein, a definition of sensitivity which extends to sub-optimal assignments, and an algorithm to explore their utility, is a subject of interest.

\bibliographystyle{IEEEtran}
\bibliography{IEEEabrv,acc}

\end{document}